\newlength{\defbaselineskip}
\theoremstyle{plain}
\newtheorem{theorem}{Theorem}[section]
\newtheorem{proposition}[theorem]{Proposition}
\newtheorem{lemma}[theorem]{Lemma}
\theoremstyle{definition}
\newtheorem{definition}[theorem]{Definition}
\newtheorem{remark}[theorem]{Remark}
\theoremstyle{remark}
\newcommand{\re}{\mathbb{R}}
\def\rn{\mathbb{R}^{n}}
\def\be{\begin{equation}}
\def\ee{\end{equation}}
\def\rife#1{(\ref{#1})}
\def\ls{\mathcal{L}^{s}}
\def\t1p0{T^{1,p}_{0}(\Omega)}
\def\m2{M^{\frac{N(p-1)}{N-1}}(\Omega)}
\def\rn{\mathbb{R}^{n}}
\def\into{\int_{\Omega}}
\def\w-1p'{W^{-1,p'}(\Omega)}
\def\pw-1p'u{L^{p'}(0,1;W^{-1,p'}(\Omega))}
\def\lp'n{(L^{p'}(\Omega))^{N}}
\long\def\salta#1{\relax}
\begin{document}

\author[F. Petitta]{Francesco Petitta}
%\ead{francesco.petitta@sbai.uniroma1.it}
%\author[X. Ros-Oton]{Xavier Ros-Oton}
%%\ead{xavier.ros.oton@upc.edu}
%
%
\address[F. Petitta]{Dipartimento di Scienze di Base e Applicate
per l' Ingegneria, ``Sapienza", Universit\`a di Roma, Via Scarpa 16, 00161 Roma, Italy.}\email{francesco.petitta@sbai.uniroma1.it}
%\address[X. Ros-Oton]{Universitat Polit\`ecnica de Catalunya, Departament de Matem\`atica Aplicada I, Diagonal 647, 08028 Barcelona, Spain. \newline{xavier.ros.oton@upc.edu}}

%\dedicatory{ }
\keywords{Integro-Differential operators, fractional laplacian, measure data, existence, uniqueness,
duality solutions} \subjclass[2010]{35R06, 35R09, 35R11, 45K05}

\begin{abstract}

We deal with existence, uniqueness, and regularity for solutions of the  boundary value problem
$$
    \begin{cases}
       {\mathcal  L}^s u = \mu &\quad \text{in $\Omega$},  \\
        u(x)=0 \quad &\text{on} \ \ \rn\backslash\Omega,
    \end{cases}
$$
where $\Omega$ is a bounded domain of $\rn$,  $\mu$ is  a bounded radon measure on $\Omega$, and  ${\mathcal  L}^s$ is a nonlocal operator of fractional order $s$ whose kernel $K$ is comparable with the one of the factional laplacian.

\end{abstract}

\title[Nonlocal Equations with measure data]{Some remarks on the duality method for Integro-Differential  equations with measure data}

%\date{\today}

\maketitle

%\tableofcontents

\section{Introduction}

In the last decade great attention has been paid to the theory of nonlocal operators, in particular integro-differential operators which naturally arise in a huge number of applications as for instance in  finance,  biology,  and physics.  The main feature of this type of operators is that they are suitable to be a model to 
anomalous diffusion process (as for instance Brownian Motions with jumps) or L\'evy Processes. For a gentle introduction to these issues we refer to \cite{v,jlv} and references therein.

Though the theory of nonlocal operators is nowadays highly developed (e.g. the regularity theory, see for instance \cite{bbck,bass,cs2} and related papers), our aim is to provide a general unified framework to existence, uniqueness and weak regularity for general  boundary values problems involving nonlocal operators on bounded domains and highly irregular data (namely measures).  

To be more concrete let $\Omega$ be a bounded domain of $\rn$ and let us consider the following boundary value problem
\begin{equation}\label{maini}
    \begin{cases}
       {\mathcal  L}^s u = \mu &\quad \text{in $\Omega$},  \\
        u(x)=0 \quad &\text{on} \ \ \rn\backslash\Omega,
    \end{cases}
\end{equation}
where $\mu$  a bounded Radon measure on $\Omega$ and  ${\mathcal  L}^s$ is a nonlocal operator of fractional order $s$, for instance (we will be more precise here below), let ${\mathcal  L}^s$ be  an integro-differential operator  with kernel $K$ such that $$ 
K(x,y)\sim \frac{1}{|x-y|^{n+2s}}\,.
$$

Following the idea introduced in \cite{kpu} (and inspired by \cite{s}) for fractional laplacian type problems in $\re^{n}$ we will introduce the notion of duality solutions for problems  as \rife{maini} and we will prove the existence of a unique solution for this problem with minimal assumptions on both the domain and the data.

We want to stress that some of the results we will present here can be found (or deduced) spread in the current literature. In particular,  the case ${\mathcal  L}^s=(-\Delta)^{s}$ (i.e. is the fractional laplacian of order $s$) and $\Omega=\rn$, as we already mentioned, was treated in \cite{kpu}, while Dirichlet boundary values problems with smooth data in bounded domains was considered for instance in \cite{chso, rs2, rs}. Again in the fractional laplacian case, boundary value problems with measure data in smooth domains (namely $C^{2}$) was recently considered in \cite{cv}.

For more general operators the theory is less complete and, besides the classical books by E. Stein (\cite{st}) and N. S. Landkof (\cite{lan}),  we refer the reader to \cite{ka,dk,fkv,lpps,bpv}. We also mention the recent \cite{kms} in which the regularity theory for the so called SOLA solutions .  

As we said, one of our goals is to provide a common framework for all these previous results: we will indeed consider general nonlocal operators on bounded domains and measure data. We will address  the question  of existence, uniqueness and weak (e.g. fractional) regularity for solution to Dirichlet problems as in \rife{maini}.

The paper is organized as follows:  in Section \ref{2} we give our concept of solution for rather general  integro-differential boundary value problems, we investigate the relation of such a concept with the classical weak notion, and we state and prove the existence and uniqueness of a duality solutions. Section \ref{frac} will be devoted to the discussion of  fractional sobolev regularity for duality solutions as well as further remarks and comments.

\section{General Integro-Differential equations}\label{2}

In this section we describe and prove our main existence and uniqueness result. As we shall stress later the Stampacchia's duality method we are going to use   is robust enough to be applied to very general linear nonlocal operators (see also Remark \ref{meta} below). Despite of this fact,   for the sake of concreteness we will develop our argument in the (anyhow large) class of symmetric integro-differential operators. In particular,  for $s\in(0,1)$, we are interested in solving the following Dirichlet boundary value problem:

\begin{equation}\label{mainide}
\left\{ \begin{array}{rcll}
\mathcal{L}^{s} u &=&\mu&\textrm{in }\Omega \\
u&=&0&\textrm{in }\rn\backslash\Omega,
\end{array}\right.
\end{equation}
where $\mu$ is any bounded radon measure and $\Omega$ is any bounded domain of $\rn$, $n\geq2$.
The operator $\ls$ is given by
\begin{equation}
\ls u(x)= {\rm PV}\int_{\rn}\bigl(u(x)-u(x+y)\bigr)K(y)dy,
\end{equation}
where $K$ is a nonnegative  kernel satisfying
\[\frac{ \lambda}{|y|^{n+2s}}\leq K(y)\leq \frac{\Lambda}{|y|^{n+2s}},\]
with $0<\lambda\leq \Lambda$.

This class of kernels is, for instance, the one considered by Caffarelli-Silvestre in \cite{cs2, cs3} (see also \cite{dk,lpps}), and it contains as a particular case the fractional laplacian case, that is 
\begin{equation}\label{fracche}
(-\Delta )^{s} u(x):= c_{n,s}{\rm PV}\int_{\rn}\frac{u(x)-u(x+y)}{|y|^{n+2s}}dy,
\end{equation}
with
$$
c_{n,s}=\frac{4^{s}s(s-1)\Gamma(\frac{n+2s}{2})}{\pi^{\frac{n}{2}}\Gamma(2-s)}\,,
$$
where $\Gamma$ is the Euler Gamma Function.

Here is our notion of solution which extends the one given in \cite{s} (see also \cite{kpu}).
\begin{definition}\label{dualei}
We say that  a function $u\in L^1(\rn)$, is
a \emph{duality solution} for problem \rife{mainide} if $u\equiv0$ in $\rn\setminus\Omega$ and
\begin{equation}\label{dualfi}
\int_{\Omega} u g \,dx= \int_{\Omega} w d\mu,
\end{equation}
for all $g\in C^{\infty}_{0}(\Omega)$, where $w$ is the weak solution of
\begin{equation}\label{duali}
\left\{ \begin{array}{rcll}
\ls w &=&g&\textrm{in }\Omega \\
w&=&0&\textrm{in }\rn\backslash\Omega\,.
\end{array}\right.
\end{equation}
 \end{definition}

\begin{remark}
Some remarks are in order to be done here. First of all the existence of a weak solution (whose definition will be recalled later, see Definition \ref{weak} below) for problem \rife{duali} is an easy consequence of Lax-Milgram lemma (see for instance \cite{fkv} or \cite{lpps}). 

 In order for all terms in \rife{dualfi} to be  well defined we need  $w\in C(\overline{\Omega})$ (or, at least,  $w\in L^{\infty}(\Omega)$ if $\mu \in L^{1}(\Omega)$); this will be showed in Lemma \ref{cont} below. 

 We refer to Section \ref{remare} below for further comments on this definition, the relations with the other available definitions and to the sense in which the boundary data is assumed.  
\end{remark}

\medskip

Before stating our main result we need to recall the following definition

\begin{definition}
We say that  $\Omega$ satisfies  the uniform \emph{exterior ball condition} if there exists $s_\Omega>0$ such that for every $x\in\rn\backslash\Omega$ with $dist(x,\partial\Omega)<s_\Omega$, there is $z_x\in\partial\Omega$ such that $|x-z_x|=dist(x,\partial \Omega)$ and $B(x_0,s_\Omega)\subset\rn\setminus\Omega$ with $x_0:=z_x+s_\Omega\frac{x-z_x}{|x-z_x|}$.  In an analogous way, one can define the  uniform \emph{interior ball condition} by replacing $\rn\setminus\Omega$ with $\Omega$.  
\end{definition}

In order to better understand the previous definition, let us recall that, as is proved in  \cite[Corollary 3.14]{AlMaz}, a domain with compact boundary is of class $\mathcal{C}^{1,1}$ if and only if it satisfies both a uniform interior ball condition and an exterior one. 
\medskip

Here is our existence and uniqueness result

\begin{theorem}\label{tei}
Let $\Omega$ be a bounded open set satisfying the uniform exterior ball condition. 
Then there exists a unique duality solution  for problem \rife{mainide} in the sense of Definition \ref{dualei}. Moreover, $u\in L^q(\rn)$, for any $q<\frac{n}{n-2s}$.
\end{theorem}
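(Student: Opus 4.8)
The plan is to follow Stampacchia's duality method. The key object is the linear operator $S\colon C^\infty_0(\Omega)\to C(\overline\Omega)$ which associates to $g$ the weak solution $w$ of \rife{duali}. The crucial ingredient — which I expect to be the main obstacle — is a uniform estimate of the form $\|w\|_{L^\infty(\Omega)}\le C\|g\|_{L^r(\Omega)}$ for a suitable exponent $r$, together with the continuity $w\in C(\overline\Omega)$; both are supplied by Lemma \ref{cont} announced in the excerpt, which itself rests on the uniform exterior ball condition (this is where the geometric hypothesis on $\Omega$ enters, guaranteeing a barrier at the boundary and hence continuity up to $\partial\Omega$, even though the Lax--Milgram solution is only a priori in $W^{s,2}_0$). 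By duality one gets that for $\mu$ a bounded measure the linear functional $g\mapsto \int_\Omega w\,d\mu$ is continuous on $C^\infty_0(\Omega)$ with respect to the $L^r$ norm, so by Riesz representation it is represented by a unique $u\in L^{r'}(\Omega)$ extended by $0$ outside $\Omega$, which is by construction the unique duality solution. Uniqueness is in fact immediate and separate: if $u$ is a duality solution with $\mu\equiv 0$ then $\int_\Omega u g\,dx=0$ for all $g\in C^\infty_0(\Omega)$, hence $u\equiv 0$; existence is the content of the representation argument.

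For the regularity statement $u\in L^q(\rn)$ for $q<\frac{n}{n-2s}$, I would sharpen the duality estimate by using the precise summability of the weak solution $w$ in terms of $g$. Indeed, the map $g\mapsto w$ should satisfy $\|w\|_{L^\infty}\le C\|g\|_{L^r}$ with any $r>\frac{n}{2s}$ (this is the Stampacchia-type regularity threshold: $W^{s,2}_0\hookrightarrow L^{2^*_s}$ with $2^*_s=\frac{2n}{n-2s}$, iterated, yields $L^\infty$ bounds precisely once the right-hand side lies in $L^r$ with $r>\frac n{2s}$). Then $|\int_\Omega w\,d\mu|\le \|w\|_{L^\infty}\|\mu\|_{\mathcal M}\le C\|\mu\|_{\mathcal M}\|g\|_{L^r}$, so the functional $g\mapsto\int_\Omega u g\,dx$ is bounded on $L^r$ for every $r>\frac n{2s}$, giving $u\in L^{r'}$ for every $r'<\frac{n}{n-2s}$, which is exactly the claimed range. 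I would present this as: fix $q<\frac n{n-2s}$, set $r=q'>\frac n{2s}$, and conclude $\|u\|_{L^q}=\sup_{\|g\|_{L^r}\le 1}\int_\Omega u g\,dx\le C\|\mu\|_{\mathcal M}$.

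Two points deserve care in writing this up. First, one must check that $C^\infty_0(\Omega)$ is dense in $L^r(\Omega)$ and that the extension of $u$ by zero is consistent with requiring $u\equiv 0$ on $\rn\setminus\Omega$, so that $u\in L^q(\rn)$ with the same norm — this is routine. Second, and more delicate, one should verify that the $u$ produced by Riesz representation genuinely satisfies Definition \ref{dualei} for \emph{all} $g\in C^\infty_0(\Omega)$ and not merely for $g$ in some dense subclass; this is automatic here since the identity \rife{dualfi} is exactly the defining property of the representing element, but it is worth a sentence. The only real analytic input beyond soft functional analysis is Lemma \ref{cont}, i.e.\ the $L^\infty$ (and $C(\overline\Omega)$) bound for weak solutions with $L^r$ data on domains with the exterior ball condition; granting that lemma, the proof of Theorem \ref{tei} is a short and clean application of duality, and I would keep it to a single paragraph once Lemma \ref{cont} is in hand.
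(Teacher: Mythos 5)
Your proposal is correct and follows essentially the same route as the paper: fix $p>\frac{n}{2s}$, use Lemma \ref{cont} to bound $|T(g)|=\bigl|\int_\Omega w\,d\mu\bigr|\leq \|w\|_{L^{\infty}(\Omega)}|\mu|(\Omega)\leq C\|g\|_{L^{p}(\Omega)}$, apply Riesz representation to get $u\in L^{p'}(\Omega)$ for every $p'<\frac{n}{n-2s}$, and obtain uniqueness from $\int_\Omega(u-v)g=0$ for all $g\in C^{\infty}_{0}(\Omega)$. The only cosmetic difference is your parenthetical sketch of how the $L^\infty$ bound in Lemma \ref{cont} might be derived (Stampacchia iteration), whereas the paper derives it by comparison with the whole-space solution and the Dong--Kim estimate; since you take the lemma as given, this does not affect the argument.
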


\begin{remark}
As we will deduce from the proof of Theorem \ref{tei} the regularity condition on $\Omega$ is essential in order to get continuity estimates up to the boundary for the solution. In the case of a general bounded domain, if $\mu\in L^{1}(\Omega)$, with the same proof, one can obtain   existence and uniqueness  of a duality solution for problem \rife{mainide} which turns out to coincide with the one obtained in \cite{lpps}. 

Also observe that, as already noticed in \cite{kpu} for the solutions in $\rn$, the regularity of the solution we find is optimal as the fundamental solution (i.e. the solution with datum $\mu=\delta_0$) for these operators is comparable to
$|x|^{2s-n}$ near the origin.  Finally observe that, as $s$ goes to $1^{-}$, we formally obtain the sharp regularity of \cite{s} for boundary value problems involving linear second order differential equations. This latter fact is completely formal as, as $s$ approaches $1$ nonlocal integro-differential operators may degenerate. 

\end{remark}

\subsection{Some useful notations and tools}

We will made use of some basic results concerning fractional Sobolev spaces (also called spaces of Bessel potentials). For a review in the subject we refer to \cite{ada,st} (see also \cite{dpv}). Let us  recall the following 

\begin{definition}%\label{fsp}
For $0<s< 1$ and $1\leq p<\infty$. We define the \emph{fractional sobolev space}
$W^{s,p}(\rn)$ as the set of all functions $u$ in $L^p (\rn)$
such that
 $$
 \int_{\rn}\int_{\rn} \frac{|u(x)-u(y)|^p}{|x-y|^{n+s p}}\ dxdy<\infty\,,
 $$
endowed with the norm
\begin{equation}\label{norm}
\|u\|_{W^{s,p}(\rn)}=\|u\|_{L^p (\rn)}
+ \left( \int_{\rn}\int_{\rn} \frac{|u(x)-u(y)|^p}{|x-y|^{n+s p}}\ dxdy\right)^{\frac{1}{p}}\,.
\end{equation}
If $p=2$ we will use the usual notation $W^{s,p}(\rn)=H^{s}(\rn)$. 
\end{definition}

If $\Omega$ is any bounded domain of $\rn$ the space $W^{s,p}(\Omega)$ is defined in a similar way, while  $
W_{0}^{s,p}(\Omega)$ is defined as the closure of $C^{\infty}_{0}(\Omega)$ with respect to the norm defined in \rife{norm}. 

The following embedding theorem is valid in domains satisfying the so-called \emph{extension property} which, roughy speaking, consists in the fact that functions in $W^{s,p}(\Omega)$ can be extended to functions in $W^{s,p}(\rn)$. Concretely one can think, for instance, of $\Omega$ to be an open domain with lipschitz boundary.

We recall the fractional Sobolev embedding result we shall use (see \cite{ada}).
\begin{theorem}[Sobolev embedding]\label{emb}
Let $\Omega$ be a bounded domain of $\rn$ with the extension property and let $sp<n$.
Then, there exists a constant $C$ depending only on $s$ and $n$, such that
$$%\label{gamma}
    \|v\|_{L^{{p^{*}_{s}}}(\Omega)}\leq C\|v\|_{W^{s, p}(\Omega)},
    \qquad \forall v\in W^{s, p}(\Omega),
$$
 where $p^{*}_{s}=\frac{np}{n-s p}$ is the fractional Sobolev critical exponent, $p>1$ and $0<s<1$. 
 
 Moreover, if  $\Omega$ has no external cusps and $ps>n$, then 
 $$
 \|v\|_{C^{\gamma}(\Omega)}\leq C\|v\|_{W^{s,p}(\Omega)}\,,     \qquad \forall v\in W^{s, p}(\Omega), 
 $$
where $\gamma=\frac{sp-n}{p}$.
%Moreover, $W^{s, p}(\Omega)$ is compactly embedded in
%$L^\gamma (\Omega)$ for any $1\leq \gamma<\overline{\gamma}$.
\end{theorem}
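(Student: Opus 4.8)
The plan is to reduce both embeddings to the corresponding global inequalities on $\rn$ through the extension property, and then to establish those two global estimates by a dyadic truncation argument in the subcritical range and by a fractional Morrey--Campanato argument in the supercritical range.

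\emph{Reduction to $\rn$.} Since $\Omega$ has the extension property there is a bounded linear operator $E\colon W^{s,p}(\Omega)\to W^{s,p}(\rn)$ with $Ev=v$ on $\Omega$, $\|Ev\|_{W^{s,p}(\rn)}\le C\|v\|_{W^{s,p}(\Omega)}$, and with $Ev$ compactly supported. Granting the global estimates $\|u\|_{L^{p^{*}_{s}}(\rn)}\le C\|u\|_{W^{s,p}(\rn)}$ for $sp<n$ and $\|u\|_{C^{\gamma}(\rn)}\le C\|u\|_{W^{s,p}(\rn)}$ for $sp>n$, the theorem follows by applying them to $u=Ev$ and restricting to $\Omega$: indeed $\|v\|_{L^{p^{*}_{s}}(\Omega)}\le\|Ev\|_{L^{p^{*}_{s}}(\rn)}$, and in the Hölder case the hypothesis that $\Omega$ has no external cusps ensures that the extension is available in this regime and that the $C^{\gamma}(\rn)$ bound on $Ev$, measured in Euclidean distance, transfers to a genuine $C^{\gamma}(\Omega)$ bound on $v$.

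\emph{Subcritical range $sp<n$.} By density, and using that the truncations $T_{k}u=\max(-k,\min(u,k))$ obey $[T_{k}u]_{W^{s,p}(\rn)}\le[u]_{W^{s,p}(\rn)}$ and $[\,|u|\,]_{W^{s,p}(\rn)}\le[u]_{W^{s,p}(\rn)}$ (where $[\cdot]_{W^{s,p}(\rn)}$ denotes the Gagliardo seminorm, i.e. the double integral in \rife{norm}), it suffices to treat a nonnegative, bounded, compactly supported $u$. I would decompose $u$ along its dyadic level sets $A_{N}=\{u>2^{N}\}$, $N\in\ze$, through the layers $u_{N}=\min\{(u-2^{N})_{+},2^{N}\}$, which vanish off $A_{N}$, equal $2^{N}$ on $A_{N+1}$, satisfy $\sum_{N}u_{N}=u$, and obey the pointwise inequality $\sum_{N}|u_{N}(x)-u_{N}(y)|^{p}\le|u(x)-u(y)|^{p}$, whence $\sum_{N}[u_{N}]_{W^{s,p}(\rn)}^{p}\le[u]_{W^{s,p}(\rn)}^{p}$. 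A layer-cake estimate gives
\[
\|u\|_{L^{p^{*}_{s}}(\rn)}^{p^{*}_{s}}\le C\sum_{N}2^{Np^{*}_{s}}\,|A_{N}|,
\]
while restricting the Gagliardo integral of $u_{N}$ to $A_{N+1}\times(\rn\setminus A_{N})$ and comparing the inner integral with the tail $\int_{|z|>R}|z|^{-n-sp}\,dz\sim R^{-sp}$ for $R\sim|A_{N}|^{1/n}$ controls each level measure in terms of $[u_{N}]_{W^{s,p}(\rn)}$. Summing over $N$ and balancing the exponents through $\frac{1}{p^{*}_{s}}=\frac{1}{p}-\frac{s}{n}$ then yields $\|u\|_{L^{p^{*}_{s}}(\rn)}\le C[u]_{W^{s,p}(\rn)}\le C\|u\|_{W^{s,p}(\rn)}$.

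\emph{Supercritical range $sp>n$.} Here I would use a fractional Poincaré inequality followed by a Campanato telescoping. Writing $u_{B_{R}}=\frac{1}{|B_{R}|}\int_{B_{R}}u$ and bounding, for $z,w\in B_{R}$, $|u(z)-u(w)|^{p}\le(2R)^{n+sp}|u(z)-u(w)|^{p}/|z-w|^{n+sp}$, Jensen's inequality gives the oscillation estimate
\[
\frac{1}{|B_{R}|}\int_{B_{R}}|u-u_{B_{R}}|^{p}\,dz\le C\,R^{sp-n}\,[u]_{W^{s,p}(\rn)}^{p}.
\]
Applying this on the nested balls $B_{2^{-k}R}(x)$ and summing the geometric series $\sum_{k}|u_{B_{2^{-k-1}R}}-u_{B_{2^{-k}R}}|$, whose terms decay like $(2^{-k}R)^{s-n/p}$, shows that $u$ has a continuous representative with $|u(x)-u(y)|\le C|x-y|^{\,s-n/p}[u]_{W^{s,p}(\rn)}$; since $\gamma=s-n/p=\frac{sp-n}{p}$ this is exactly the claimed Hölder seminorm bound, and the same averaging with $R=1$ gives $\|u\|_{L^{\infty}(\rn)}\le C\|u\|_{W^{s,p}(\rn)}$, completing the $C^{\gamma}$ estimate. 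The main obstacle is the subcritical estimate, where the dyadic summation is critical: the per-layer measure bound and the final sum in $N$ must match exponents exactly, so that one has to exploit both $\sum_{N}[u_{N}]^{p}_{W^{s,p}(\rn)}\le[u]^{p}_{W^{s,p}(\rn)}$ and the precise identity $\frac{1}{p^{*}_{s}}=\frac{1}{p}-\frac{s}{n}$ for the series to close; by contrast the Morrey estimate is soft once the fractional Poincaré inequality is in hand.
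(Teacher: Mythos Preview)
The paper does not prove this theorem: it is stated as a recalled result, with a bare citation to Adams--Fournier \cite{ada} (and implicitly to \cite{dpv}), and no argument is given in the text. There is therefore no ``paper's own proof'' to compare your proposal against.

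That said, your outline is a correct self-contained route. The reduction via the extension operator is standard, the dyadic level-set decomposition for the subcritical embedding is essentially the argument of \cite{dpv}, and the Campanato telescoping for the Morrey case is the usual one. The only place your sketch is genuinely thin is the per-layer measure estimate in the subcritical part: the bound you need is the \emph{lower} bound
\[
\int_{\rn\setminus A_{N}}\frac{dy}{|x-y|^{n+sp}}\ \ge\ c\,|A_{N}|^{-sp/n},\qquad x\in A_{N+1},
\]
obtained by comparing $A_{N}$ with the ball of the same measure centered at $x$, which yields a recursion $|A_{N+1}|\le C\,2^{-Np}\,|A_{N}|^{sp/n}[u_{N}]_{W^{s,p}}^{p}$; closing the sum over $N$ then requires a discrete H\"older/interpolation step, not a direct identity, and it is precisely there that $\tfrac{1}{p^{*}_{s}}=\tfrac{1}{p}-\tfrac{s}{n}$ is used. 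With that detail filled in, the argument goes through.
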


\subsection{Weak solution vs Duality Solution}
\label{remare}

Due to the generality of the results we presented, some important features of the duality formulation we introduced could be be missed. At first glance, in such a nonregular framework, one could ask weather the duality formulation is the good one in order to give sense to the boundary datum $u=0$ on $\rn\backslash\Omega$. 

In order to give some insights towards  the question of the boundary datum we restrict ourself to the toy model of the fractional laplacian case. The reader will easily deduce that the following arguments keep working in more general cases as an integration by parts formula in all of $\rn$ is available for functions in $H^{s}(\rn)$.   

Let $\Omega$ be a bounded open set of $\rn$ and let us consider the following problem 
\begin{equation}\label{main}
    \begin{cases}
        (-\Delta)^s u = \mu &\quad \text{in $\Omega$},  \\
        u(x)=0 \quad &\text{on} \ \ \rn\backslash\Omega\,,
    \end{cases}
\end{equation}
where $\mu$ is a bounded  Radon measure on  $\Omega$ of $\rn$, and $ (-\Delta)^s$ is the fractional laplace operator introduced in \rife{fracche} with  $s\in (0,1)$.

As we already noticed, for data regular enough  (namely if  $\mu\in H^{-s}(\Omega)$), then existence of a finite energy solution (i.e. weak solutions in $H^{s}(\Omega)$) for problem  \rife{main} is an easy consequence of Lax-Milgram Lemma (see for instance \cite{lpps}). Existence, uniqueness and regularity up to the boundary for solutions to problem \rife{main} can be found  in \cite{rs2} in the case of bounded data $\mu\in L^{\infty}(\Omega)$. 

In Definition \ref{dualei} we added the boundary condition $u=0$ on $\rn\backslash\Omega$. This  seems to be quite artificial  and it need to be better explained. If $u$ satisfies \rife{dualfi} then, at least formally, using the integration by parts formula in all $\rn$, we have  
    \[\begin{split} \int_\Omega w\,d\mu=\int_\Omega u(-\Delta)^sw&=\int_{\rn} u(-\Delta)^sw-\int_{\rn \setminus\Omega}u(-\Delta)^sw\\
    &=\int_{\rn} (-\Delta)^s u\,\,w-\int_{\rn \setminus\Omega}u(-\Delta)^sw\\
    &=\int_{\Omega} w\,(-\Delta)^su-\int_{\rn \setminus\Omega}u(-\Delta)^sw,\end{split}\]
    where in the last equality we have used $w\equiv0$ in $\rn\setminus\Omega$.

The previous identity can be recast as
    \begin{equation}\label{u}
    \int_\Omega w\,d\mu=\int_{\Omega} w\,(-\Delta)^su+\int_\Omega\int_{\rn \setminus\Omega}\frac{w(x)u(y)}{|x-y|^{n+2s}}\,dx\,dy.
    \end{equation}
In particular \rife{u} is satisfied  for all $w\in C^\infty_c(\Omega)$. That is,
    \[(-\Delta)^su(x)+\int_{\rn \setminus\Omega}\frac{u(y)}{|x-y|^{n+2s}}\,dy=\mu(x)\,, \ \text{in}\ \mathcal{D}'(\Omega). \]
    The second term is a function of $x$ that depends on the values of $u$ outside. This  suggest to impose  $u=0$ on $\rn\backslash\Omega$.  Proposition \ref{equiv} below clarifies that this is the right choice. 
First of all we need the following definition (see \cite{rs2}) recalling that, by Theorem \ref{emb}, $L^{(2^{*}_{s})'} (\Omega)\subset H^{-s}(\Omega)$. 

\begin{definition}\label{weak}
Let $\mu\in L^{(2^{*}_{s})'} (\Omega)$. A weak solution for problem \rife{main} is a function $u\in H^{s}(\rn)$ such that $u\equiv 0 $ a.e. on $\rn\backslash\Omega$, and
$$
\int_{\rn}(-\Delta)^{\frac{s}{2}}u(-\Delta)^{\frac{s}{2}}v\ dx= \int_{\Omega}\mu v\ dx
$$
for any $v\in H^{s}(\rn)$ such that $v\equiv 0 $ (a.e.) on $\rn\backslash\Omega$.
\end{definition}
\begin{proposition}\label{equiv}
Let $\mu\in L^{(2^{*}_{s})'} (\Omega)$ then $u$ is a duality solution of problem \rife{main} in the sense of Definition \ref{dualei} if and only if $u$ is a weak solution of problem \rife{main}.
\end{proposition}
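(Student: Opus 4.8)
The plan is to prove both implications by exploiting the symmetry of the bilinear form attached to $(-\Delta)^{s}$ on the energy space $X:=\{v\in H^{s}(\rn):v\equiv0\text{ a.e. in }\rn\setminus\Omega\}$, together with the density of $\C$ in $L^{1}$-type spaces and the already available existence theory for regular data.

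\textbf{Weak solution $\Rightarrow$ duality solution.} Let $u\in X$ be a weak solution and fix $g\in\C$. Since $g\in L^{(2^{*}_{s})'}(\Omega)$, the function $w$ entering Definition \ref{dualei} is precisely the weak solution, in the sense of Definition \ref{weak}, of $(-\Delta)^{s}w=g$ in $\Omega$ with $w\equiv0$ on $\rn\setminus\Omega$; in particular $w\in X$ and
$$
\int_{\rn}(-\Delta)^{\frac{s}{2}}w\,(-\Delta)^{\frac{s}{2}}v\,dx=\int_{\Omega}g v\,dx\qquad\text{for all }v\in X.
$$
Testing this identity with $v=u$, and the weak formulation satisfied by $u$ with $v=w$, the two left-hand sides coincide by symmetry of the bilinear form, whence
$$
\int_{\Omega}u g\,dx=\int_{\Omega}\mu w\,dx=\int_{\Omega}w\,d\mu,
$$
i.e. \rife{dualfi} holds; as $g\in\C$ was arbitrary, $u$ is a duality solution.

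\textbf{Duality solution $\Rightarrow$ weak solution.} Since $\mu\in L^{(2^{*}_{s})'}(\Omega)\subset H^{-s}(\Omega)$, the Lax--Milgram lemma furnishes a weak solution $\tilde u\in X$ of \rife{main}. By the implication just proved, $\tilde u$ is also a duality solution, so that, subtracting the two duality identities relative to the same $g$ (and hence the same $w$), $\int_{\Omega}(u-\tilde u)\,g\,dx=0$ for every $g\in\C$. As $u-\tilde u\in L^{1}_{\mathrm{loc}}(\Omega)$, the fundamental lemma of the calculus of variations gives $u=\tilde u$ a.e. in $\Omega$; both functions vanish outside $\Omega$, hence $u=\tilde u$ a.e. in $\rn$, and therefore $u\in H^{s}(\rn)$ is a weak solution.

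\textbf{Where the difficulty lies.} There is no hard estimate here; the care is entirely in the bookkeeping. One must check that every term in \rife{dualfi} is meaningful --- this is exactly where one invokes $w\in\cov$ from Lemma \ref{cont} (or merely $w\in\lio$ when $\mu\in\elle{1}$) --- and that the bilinear form $\int_{\rn}(-\Delta)^{\frac{s}{2}}v_{1}\,(-\Delta)^{\frac{s}{2}}v_{2}\,dx$ is symmetric and may legitimately be evaluated on the pair $(u,w)\in X\times X$, which is immediate once both solutions are known to belong to $X$. The only conceptual input beyond Definitions \ref{dualei} and \ref{weak} is the existence part of the weak theory (Lax--Milgram), used to produce the comparison solution $\tilde u$ in the second implication.
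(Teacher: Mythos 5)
Your proof is correct and follows essentially the same route as the paper: the forward implication is the symmetry of the bilinear form (the paper writes it as an integration by parts over $\rn$ using $u=0$ outside $\Omega$), and the converse is obtained by producing the weak solution $\tilde u$, noting it is also a duality solution, and concluding $u=\tilde u$ from $\int_\Omega (u-\tilde u)g\,dx=0$ for all $g\in\C$. Your write-up is slightly more explicit about where symmetry and Lax--Milgram enter, but the argument is the same.
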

\begin{proof}
Let $u$ be a  weak solution of problem \rife{main}, and let $g\in C^{\infty}_{0}(\Omega)$.   If $w$ be the corresponding solution with datum $g$.
    Then, integrating by parts and using that $u=0$ outside $\Omega$, we have
$$
\begin{array}{l}
\displaystyle\int_{\Omega} u g\,dx=
\int_{\Omega}u(-\Delta)^s w\,dx=\int_{\rn}u(-\Delta)^s w\,dx\\\\  \displaystyle-\int_{\rn\backslash\Omega}u(-\Delta)^s w\,dx
= \int_{\rn}(-\Delta)^{\frac{s}{2}} u (-\Delta )^{\frac{s}{2}} w dx=\int_{\Omega}\mu w\,dx\,.
\end{array}
$$
Now, let $u$ be the duality solution of problem \rife{main} and let $\tilde{u}$ be the weak solution of the same problem. Then, reasoning as before, we have
$$
\int_{\Omega} \tilde{u} g\,dx=\int_{\Omega}\mu w\,dx
$$
for any $g\in C^{\infty}_{0}(\Omega)$. Then, subtracting the formulation of $u$ we get
$$
\int_{\Omega}(u- \tilde{u}) g\,dx= 0,
$$
for any $g\in C^{\infty}_{0}(\Omega)$, that implies $u=\tilde{u}$.
\end{proof}

\begin{remark}\label{valdi}
The previous result shows the equivalence between the duality and the weak formulations in the case of finite energy solutions for homogeneous problems in bounded domains.  In the case of general nonhomogeneous  integro-differential problems of the form
\begin{equation}\label{phi}
\left\{ \begin{array}{rcll}
\ls u &=&0&\textrm{in }\Omega \\
u&=&\phi&\textrm{in }\rn\backslash\Omega,
\end{array}\right.
\end{equation}
it is worth introducing the further notion of viscosity solution at least for smooth data $\phi$ (see for instance \cite{cs2, barles}).

For instance, let $\phi$ be a bounded function in $C^{\gamma}(\rn\backslash\overline{\Omega})$ for some positive $\gamma<1$, and let $\Omega$ satisfying the uniform exterior ball condition.  The existence of a unique viscosity solution for problem \rife{phi} can be proved by Perron's method in a standard way through the construction of suitable barriers (see \cite{cs3} and \cite{kl}). As pointed out in \cite{rs2} (see also \cite{sv}) the unique viscosity solution for problem \rife{phi} turns out to coincides with the weak one due to both the  interior regularity for viscosity solutions (see for instance \cite{cs2}) and the existence and uniqueness of weak solution for the same problem (see for instance \cite{fkv}).
\end{remark}

\subsection{Existence and Uniqueness of a duality solution}
Before proving Theorem \ref{tei} we need a preliminary result which is based on the main result in \cite{dk}. In this paper, the authors proved that if
\[\ls v=h\quad \text{in}\ \rn,\]
and $h\in L^p(\rn)$, with $p>1$, then
%\begin{equation}\label{czt}
$$
\|v\|_{W^{2s,p}(\rn)}\leq C\|h\|_{L^p(\rn)}\,.
$$
%\end{equation}
By the Sobolev embedding, this implies that
\begin{equation}\label{dk}\|v\|_{C^\gamma(\rn)}\leq C\|h\|_{L^p(\rn)}\quad whenever\ p>\frac{n}{2s}.\end{equation}

\begin{lemma} \label{cont}
Let $\Omega$ be any bounded domain, $p>\frac{n}{2s}$, $f\in L^p(\Omega)$, and $u$ be a solution of \eqref{main} with $\mu=f$.
Then,
\begin{itemize}
\item[(a)] $u$ is bounded and
\[\|u\|_{L^\infty(\Omega)}\leq C\|f\|_{L^p(\Omega)}.\]
\item[(b)] If in addition $\Omega$ satisfies the uniform exterior ball  condition, then $u$ is $C(\overline\Omega)$. 
\end{itemize}
\end{lemma}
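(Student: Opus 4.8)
\textbf{Proof plan for Lemma \ref{cont}.}

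The plan is to reduce the bounded-domain problem \eqref{main} with $\mu = f \in L^p(\Omega)$ to the global regularity estimate \eqref{dk} coming from \cite{dk}. For part (a), first extend $f$ by zero to all of $\rn$; call the extension $\tilde f$, which still lies in $L^p(\rn)$ with $\|\tilde f\|_{L^p(\rn)} = \|f\|_{L^p(\Omega)}$. Let $v$ be the solution of $\ls v = \tilde f$ in all of $\rn$ given by \cite{dk}; by \eqref{dk}, since $p > \frac{n}{2s}$, we have $v \in C^\gamma(\rn)$ with $\|v\|_{C^\gamma(\rn)} \leq C\|f\|_{L^p(\Omega)}$, and in particular $\|v\|_{L^\infty(\rn)} \leq C\|f\|_{L^p(\Omega)}$. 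The function $v$ does not vanish outside $\Omega$, so it is not the solution $u$ we want; the difference $z := u - v$ should satisfy $\ls z = 0$ in $\Omega$ with exterior datum $z = -v$ on $\rn \setminus \Omega$. I would then invoke the weak maximum principle for $\ls$ (available e.g. from \cite{fkv, lpps}): since $\ls z = 0$ in $\Omega$ and $\ls$ is a nonnegative nonlocal operator, $z$ attains its supremum and infimum over $\rn$ in the exterior region, so $\|z\|_{L^\infty(\rn)} \leq \|v\|_{L^\infty(\rn \setminus \Omega)} \leq \|v\|_{L^\infty(\rn)} \leq C\|f\|_{L^p(\Omega)}$. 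Combining, $\|u\|_{L^\infty(\Omega)} \leq \|z\|_{L^\infty(\rn)} + \|v\|_{L^\infty(\rn)} \leq C\|f\|_{L^p(\Omega)}$, which is (a).

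For part (b), with $\Omega$ satisfying the uniform exterior ball condition, I would keep the decomposition $u = v + z$ and argue continuity of each piece up to $\overline\Omega$. The term $v$ is already globally Hölder continuous by \eqref{dk}, so the task is to show $z \in C(\overline\Omega)$, where $z$ solves the homogeneous equation $\ls z = 0$ in $\Omega$ with continuous (indeed Hölder) exterior data $-v$. This is precisely the regularity-up-to-the-boundary statement for the nonhomogeneous Dirichlet problem \eqref{phi} with smooth exterior datum: under the uniform exterior ball condition one constructs suitable barriers at each boundary point (as recalled in Remark \ref{valdi}, following \cite{cs3, rs2, kl}), obtaining a modulus of continuity for $z$ at $\partial\Omega$; combined with the interior Hölder regularity for $\ls z = 0$ (see \cite{cs2}), this gives $z \in C(\overline\Omega)$. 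Hence $u = v + z \in C(\overline\Omega)$.

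The main obstacle is part (b): one must be careful that the exterior-ball barrier construction for the homogeneous problem \eqref{phi} genuinely applies to the general kernel $K$ with $\lambda|y|^{-n-2s} \leq K(y) \leq \Lambda|y|^{-n-2s}$ and not merely to the pure fractional Laplacian, and that the continuity of the exterior datum $v|_{\rn\setminus\Omega}$ (which is only $C^\gamma$, not smooth, near $\partial\Omega$) is enough to run the barrier comparison — it is, since barriers control the modulus of continuity in terms of that of the data, and $v$ is uniformly continuous on $\rn$. One should also check the compatibility of the weak and pointwise/viscosity notions used when passing between \cite{dk} (which works globally in $\rn$) and the weak solution of \eqref{main} on $\Omega$; as observed in Remark \ref{valdi}, for these operators the weak and viscosity solutions of the homogeneous exterior-value problem coincide, so the barrier estimates transfer to the weak solution $z$, and the decomposition $u = v + z$ is legitimate at the level of weak solutions. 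The remaining steps — the zero extension, the maximum principle bound, and assembling the estimates — are routine.
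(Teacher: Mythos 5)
Your proposal is correct and follows essentially the same route as the paper: solve the whole-space problem with datum $f\chi_\Omega$ (controlled by the estimate \eqref{dk} from \cite{dk}), subtract to get a function that is $\ls$-harmonic in $\Omega$ with H\"older exterior data, bound it by the maximum principle for (a), and invoke the barrier/viscosity regularity up to the boundary under the exterior ball condition for (b). The only cosmetic difference is that the paper first reduces to $f\geq 0$ and uses the sign form of the maximum principle, whereas you use the sup--inf form directly; both are equivalent here.
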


\begin{proof}
Without loss of generality, we assume that $f\geq0$ (if not, using linearity we can consider $|f|$ and then apply the result).

Let $\tilde u$ be the nonnegative weak solution in all of $\rn$ of 
\[\ls \tilde u=f\chi_\Omega\quad \text{in}\ \rn.\]

By \rife{dk}, we have
\[\|\tilde u\|_{C^\gamma(\rn)}\leq C\|f\|_{L^p(\Omega)},\]
where $\gamma=2s-\frac{n}{p}$.

Let us now consider $v=\tilde u-u$.
This function satisfies
\begin{equation}\label{caffa}\left\{ \begin{array}{rcll}
\ls v &=&0&\textrm{in }\Omega \\
v&=&\tilde u&\textrm{in }\rn\backslash\Omega,
\end{array}\right.\end{equation}
with $\tilde u\geq0$ and $\tilde u\in C^\gamma(\rn)$.
Let us prove now (a) and (b)

(a) By the maximum principle, since $\tilde u\geq0$ then we will have $v\geq0$.
Thus, 
\[0\leq u=\tilde u-v\leq \tilde u\leq  C\|f\|_{L^p(\Omega)},\]
and hence 
\[\|u\|_{L^\infty(\Omega)}\leq  C\|f\|_{L^p(\Omega)}.\]

(b) As  $\Omega$ satisfies the uniform exterior ball condition and $\tilde u$ is $C^\gamma(\rn)\cap L^{\infty}(\rn)$, then ${v}$,  the  solution of \rife{caffa}, is $C(\overline{\Omega})$ (see \cite{cs3}, \cite{kl}, and Remark \ref{valdi} above).

This concludes the proof of Lemma \ref{cont} as $u=\tilde u - v$.
\end{proof}

\begin{proof}[Proof of Theorem \ref{tei}]
In order to get the optimal regularity of $u$ we will consider less regular functions $g$ in Definition \ref{dualei}. The equivalence between the two definition relies on a straightforward density argument.  Let us fix $p>\frac{n}{2s}$.
For any $g\in L^{p}(\Omega)$, let us
define the following operator $T: L^{p}(\Omega)\mapsto \re$ through
$$
T(g):=\int_{\Omega} w(x)\ d\mu.
$$
Using Lemma \ref{cont}, $T$ is well defined, and we can write
$$
|T(g)| \leq \|w\|_{L^{\infty}(\Omega)}\ |\mu|(\Omega)\leq C\|g\|_{L^{p}(\Omega)},
$$
where $C$ depends only on $\Omega, \mu, n, s$ and $p$.
Then $T$ is a  bounded continuous
linear functional on  $L^{p}(\Omega)$, so that by
Riesz Representation Theorem, there
exists a unique function  $u\in L^{p'}(\Omega) $,  such that
\begin{equation}\label{sig}
    \int_{\Omega} w\ d\mu=\int_{\Omega}ug.
\end{equation}
{Of course, we can repeat the argument for any
$p>\frac{n}{2s}$  and we find a unique $u\in L^{p'}(\Omega)$   (and so $p' < \frac{n}{n-2s}$) such that (\ref{sig})  holds.}

Uniqueness easily follows by the fundamental theorem of calculus of variations as if $u$ and $v$ are two solutions in the sense of Definition \ref{dualei} then one has
$$
\into (u-v)g = 0
$$
for any $g\in C^{\infty}_{0}(\Omega)$  and so $u=v$.

\end{proof}

\begin{remark}\label{meta}
Let us notice the fact that our existence and uniqueness result for these general integro-differential operators does not rely at all on the knowledge of the fundamental solution.  Moreover, continuity of solution to problem \rife{main} with sufficiently smooth data is what is needed here in order to apply the duality method.  However, more precise regularity results up to the boundary can be found in the literature in some particular cases: e.g. the  fractional laplace case (\cite{rs2}) or the case of anisotropic $\alpha$-stable processes  (\cite{rs3},  see also Section \ref{alpha} below for a precise definition of these latter type of operators).

We also would like to stress a general fact about the method we used. As it is well known (and,  as one can easily deduce from the proof of Theorem \ref{tei}) the Stampacchia's duality method relies essentially on two main ingredients: linearity (in particular on the possibility to define an adjoint operator, $\mathcal{L}^{s}$ itself in Theorem \ref{tei} as $\mathcal{L}^{s}$ is self-adjoint),  and a regularity result. For the sake of exposition we restricted ourself on the  integro-differential case with symmetric kernels. Anyhow, it is clear that   a formally identical  existence and uniqueness metatheorem can be proved in the same way for very general boundary value problems involving (local or) nonlocal linear operators.  For a review on more general nonlocal operators (e.g. $x$-dependent kernels,  non-symmetric case, etc.) we refer for instance to \cite{fkv} and references therein. Roughly speaking, such a metatheorem reads as: { let $\mathcal{L}$ be a nonlocal linear operator, $\mu$ a bounded radon measure on $\Omega$, and let $\mathcal{G}^{*}$ be the Green operator for the homogeneous Dirichlet boundary value problem associated to the  adjoint operator $\mathcal{L}^{*}$. If $\mathcal{G}^{*}$ maps continuously $L^{q}(\Omega)$ into $C(\overline{\Omega})$, for any $q>q_{0}$, then there exist a unique duality solution for $u\in L^{p}(\Omega)$ for any $p<\frac{q_{0}}{q_{0}-1}$ for 
$$
    \begin{cases}
       {\mathcal  L} u = \mu &\quad \text{in $\Omega$},  \\
        u=0 \quad &\text{on} \ \ \rn\backslash\Omega\,.
    \end{cases}
$$ 
}
\end{remark}

\section{Further Remarks and Extensions}\label{frac}

This final section is devoted to present some further remarks on the regularity of the duality solutions found in Theorem \ref{tei}. One of the main tools in order to study Sobolev fractional regularity for solutions to Dirichlet integro-differential problems relies on the use of Bessel potentials associated with this kind of operators. A general treatment of this issue is out of the purpose of this note so, for the sake of concreteness, we will describe the method in some particular cases.

\subsection{Local regularity for $\alpha$-stable processes}
\label{alpha}
As one can imagine Sobolev fractional regularity of duality solutions can be deduced  if we have some informations on the exact behavior of the fundamental solution for this operator. For the sake of exposition  we consider the case of anisotropic $\alpha$-stable process and we readapt the result in \cite{kpu} in order to get sharp local Sobolev regularity  for the solution of \rife{main}.

Let $\Omega$ be a bounded open set of $\rn$. Consider
\begin{equation}\label{mainia}
\left\{ \begin{array}{rcll}
L^{\alpha} u &=&\mu&\textrm{in }\Omega \\
u&=&0&\textrm{in } \rn \backslash\Omega,
\end{array}\right.
\end{equation}
where $\mu$ is a radon bounded measure on  $\Omega$.
The operator $L^{\alpha}$ is given by
$$%\label{La}
L^{\alpha}u(x)= {\rm PV}\int_{\rn}\bigl(u(x)-u(x+y)\bigr)\frac{a\left(y/|y|\right)}{|y|^{n+\alpha}}dy,
$$
where $0<\alpha<2$ and $a:S^{n-1}\longrightarrow \re$ is a nonnegative and symmetric function that satisfies the uniform ellipticity condition
\[\lambda\leq a(\theta)\leq \Lambda\qquad \textrm{for all}\quad \theta\in S^{n-1},\]
with $0<\lambda\leq \Lambda$.

These operators are infinitesimal generators of a very special class of L\'evy processes: the so-called $\alpha$-stable processes. With respect to the previous section here we use the usual convention $\alpha={2s}$

It is proved in \cite{S} that the potential kernel $K$ associated to $L^{\alpha}$ (i.e., the fundamental solution of the operator) satisfies
\[\frac{c_{1}}{|y|^{n-\alpha}}\leq K(y)\leq \frac{c_{2}}{|y|^{n-\alpha}}\,,\]
for suitable positive constants $c_{1}\leq c_{2}$, and
\[K(y)=|y|^{\alpha-n}K\left(\frac{y}{|y|}\right).\]
Moreover, by Theorem 1 in \cite{BSS}, we have that $K$ belongs to the H\"older space $C^{2\alpha-\epsilon}_{loc}(\rn\setminus\{0\})$ for all $\epsilon>0$.

Thus, it follows that $K$ is a fractional kernel of order $\alpha$ and regularity $2\alpha-\epsilon$ in the sense of Definition 4.1 in \cite{GCG}. In particular, we have the following

\begin{theorem}\label{gcg}
The Riesz potential associated to  $L^{\alpha}$
\begin{equation}I^{\alpha}(f)(x)=\int_{\rn}f(y)K(x-y)dy \label{quela}\end{equation}
maps $L^p(\rn)$ boundedly into $C^{\alpha-\frac np}(\rn)$ whenever $p>n/\alpha$. That is, 
\[\| I^{\alpha}(f)\|_{C^{\alpha-\frac np}(\rn)}\leq C\|f\|_{L^p(\rn)},\]
with $C$ depending only on $p$, $n$, $\lambda$, $\Lambda$, and $\alpha$.
\end{theorem}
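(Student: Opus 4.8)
The statement is the classical Hardy--Littlewood--Sobolev/Morrey mapping property of Riesz potentials, here for a variable, homogeneous, uniformly elliptic kernel; since it has just been recorded that $K$ is a fractional kernel of order $\alpha$ and regularity $2\alpha-\epsilon$ in the sense of Definition~4.1 in \cite{GCG}, the quickest route is to quote the corresponding mapping property from \cite{GCG}. For a self-contained proof I would argue directly, using only $\tfrac{c_1}{|y|^{n-\alpha}}\le K(y)\le \tfrac{c_2}{|y|^{n-\alpha}}$, the homogeneity $K(y)=|y|^{\alpha-n}K(y/|y|)$, and $K\in C^{2\alpha-\epsilon}_{loc}(\rn\setminus\{0\})$. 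Put $\gamma:=\alpha-\tfrac np\in(0,\alpha)\subset(0,2)$; I would first treat $\gamma\in(0,1)$ and then reduce the range $\gamma\ge1$ (possible only for $\alpha\in(1,2)$) to it.

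\textbf{Step 1 (sup bound).} I would show $|I^{\alpha}(f)(x)|\le C\|f\|_{L^p(\rn)}$ uniformly in $x$. Splitting $\rn=B_1(x)\cup(\rn\setminus B_1(x))$, using $K\le c_2|\cdot|^{\alpha-n}$ and Hölder's inequality, the near part is $\le\|f\|_{L^p}\bigl(\int_{B_1}|z|^{(\alpha-n)p'}\,dz\bigr)^{1/p'}$, and the last integral is finite exactly because $p>\tfrac n\alpha$ amounts to $(n-\alpha)p'<n$. In the application \eqref{mainia} the datum vanishes outside the bounded set $\Omega$, so the far part is a finite integral treated the same way; for a general $L^p(\rn)$ datum $I^{\alpha}$ has to be read modulo constants and the intrinsic, scale-invariant content is the seminorm estimate of Step 2.

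\textbf{Step 2 (Hölder seminorm).} Fix $x\ne x'$, set $\rho:=|x-x'|$, and write
\[
I^{\alpha}(f)(x)-I^{\alpha}(f)(x')=\int_{\rn}\bigl(K(x-y)-K(x'-y)\bigr)f(y)\,dy,
\]
splitting over the near region $N=\{|x-y|<2\rho\}$ and the far region $F=\rn\setminus N$. On $N$ I would bound the kernel difference by $c_2\bigl(|x-y|^{\alpha-n}+|x'-y|^{\alpha-n}\bigr)$ and use Hölder over a ball of radius $\sim\rho$, obtaining $\le C\|f\|_{L^p}\bigl(\int_{B_{c\rho}}|z|^{(\alpha-n)p'}\,dz\bigr)^{1/p'}\sim C\|f\|_{L^p}\,\rho^{\gamma}$. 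On $F$ one has $|x-y|\sim|x'-y|\ge2\rho$, and the decisive point is the quantitative kernel estimate
\[
|K(x-y)-K(x'-y)|\le C\,\rho^{\delta}\,|x-y|^{\alpha-n-\delta}\qquad\text{for }|x-y|\ge2\rho,
\]
valid for some exponent $\delta\in(\gamma,1]$: it follows from the homogeneity together with the local regularity of $K$ (take $\delta=1$ via the mean value theorem and $|\nabla K(y)|\le C|y|^{\alpha-n-1}$ when $2\alpha-\epsilon\ge1$, and $\delta=2\alpha-\epsilon$ otherwise, which is admissible since $2\alpha-\epsilon>\alpha>\gamma$ for $\epsilon$ small). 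Plugging this in and using Hölder over $\{|x-y|\ge c\rho\}$ gives a far contribution $\le C\rho^{\delta}\|f\|_{L^p}\bigl(\int_{|z|\ge c\rho}|z|^{(\alpha-n-\delta)p'}\,dz\bigr)^{1/p'}\le C\rho^{\delta}\rho^{\gamma-\delta}\|f\|_{L^p}=C\rho^{\gamma}\|f\|_{L^p}$, the integral converging precisely because $\delta>\gamma$. Adding the two contributions yields $[I^{\alpha}(f)]_{C^{\gamma}(\rn)}\le C\|f\|_{L^p(\rn)}$, and together with Step 1 this is the claim, with $C=C(n,\alpha,p,\lambda,\Lambda)$.

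\textbf{The main obstacle} is exactly the far-field estimate with an exponent \emph{strictly} larger than $\gamma$; this is where the regularity $2\alpha-\epsilon$ of $K$ and its homogeneity enter, and it is also what breaks down when $\gamma\ge1$, since then no admissible $\delta\le1$ exceeds $\gamma$. In that case I would differentiate under the integral: $\nabla I^{\alpha}(f)(x)=\int_{\rn}\nabla_xK(x-y)f(y)\,dy$ is a potential with kernel $\nabla K$, homogeneous of order $\alpha-1$ and in $C^{2\alpha-1-\epsilon}_{loc}$, and since $\gamma<\alpha<2$ the pair $(\alpha-1,\gamma-1)$ already lies in the range of Steps 1--2, so one reduction proves $\nabla I^{\alpha}(f)\in C^{\gamma-1}$ and hence $I^{\alpha}(f)\in C^{\gamma}$. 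Everything else is the routine combination of dyadic splitting and Hölder's inequality, with all exponent bookkeeping governed by $p>\tfrac n\alpha$.
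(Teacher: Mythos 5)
Your proposal is correct, but you should know that the paper's entire proof of this theorem is the single line ``see Theorem 5.2 in \cite{GCG}'': the work has already been done in the preceding paragraph, where it is checked that $K$ is a fractional kernel of order $\alpha$ and regularity $2\alpha-\epsilon$ in the sense of Definition 4.1 of \cite{GCG}, so the mapping property is quoted as a black box. Your opening sentence identifies exactly this route, and the self-contained argument you then supply is the standard Morrey-type proof of the $L^p\to C^{\alpha-n/p}$ bound for Riesz potentials; it is sound. The exponent bookkeeping is right ($(n-\alpha)p'<n$ is equivalent to $p>n/\alpha$, and the near-region H\"older estimate produces exactly $\rho^{\gamma}$), and you correctly isolate the one nontrivial ingredient: the far-field cancellation bound $|K(x-y)-K(x'-y)|\le C\rho^{\delta}|x-y|^{\alpha-n-\delta}$ with some $\delta>\gamma$, which is where the homogeneity $K(y)=|y|^{\alpha-n}K(y/|y|)$ and the $C^{2\alpha-\epsilon}_{loc}$ regularity enter --- precisely the content of the ``regularity $2\alpha-\epsilon$'' hypothesis that \cite{GCG} packages into its definition. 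Your two caveats are also the right ones and are glossed over by the bare statement: for a general $L^p(\rn)$ datum the potential is only defined modulo constants (the pointwise bound of Step 1 genuinely uses that $f$ vanishes outside the bounded set $\Omega$ in the application to \eqref{mainia}), and for $\gamma\ge1$ one must pass to $\nabla K$, homogeneous of degree $\alpha-1-n$, to interpret $C^{\gamma}$ as $C^{1,\gamma-1}$. In short: the paper buys brevity by outsourcing to \cite{GCG}; your direct proof buys transparency about exactly which properties of the anisotropic kernel (two-sided bounds, homogeneity, H\"older continuity on the sphere, with constants depending only on $n,\alpha,p,\lambda,\Lambda$) are actually used.
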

\begin{proof}
 see Theorem 5.2 in \cite{GCG}.
\end{proof}

Due to Theorem \ref{gcg}, and in particular to the integral representation \rife{quela} it is straightforward to reproduce the argument in \cite{kpu} in order to show the existence of a unique duality solution $\tilde{u}$  for problem
$$
L^{\alpha} u =\mu\ \textrm{in }\rn,
$$

whenever $\mu$ is set to be zero outside $\Omega$. Moreover, $\tilde{u}\in W^{1-\frac{2-\alpha}{q},q}_{loc}(\rn)$
for any $q<\frac{n+2-\alpha}{n+1-\alpha}$ 

For simplicity, in what follows we also assume $\alpha>1$, (i.e. $s>\frac{1}{2}$, see \cite{kpu}, Pag. 2,  and Remark \ref{alp} below for further comments on this restriction) and $\mu$ to be nonnegative. 

We have the following local result

\begin{theorem} The duality solution $u$ of problem \rife{mainia} belongs to $W^{1-\frac{2-\alpha}{q},q}_{loc}(\Omega)$
for any $q<\frac{n+2-\alpha}{n+1-\alpha}$. 
\end{theorem}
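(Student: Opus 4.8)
The plan is to compare the solution $u$ of the Dirichlet problem \rife{mainia} with the global duality solution $\tilde u$ of $L^{\alpha}\tilde u=\mu$ in $\rn$ (with $\mu$ extended by zero outside $\Omega$), whose local regularity $\tilde u\in W^{1-\frac{2-\alpha}{q},q}_{loc}(\rn)$ for $q<\frac{n+2-\alpha}{n+1-\alpha}$ has just been recorded. Since Sobolev regularity is a local statement, it suffices to prove that the difference $v:=\tilde u-u$ enjoys (at least) the same regularity inside $\Omega$; then $u=\tilde u-v$ inherits it. The key observation is that $v$ solves a \emph{homogeneous} problem in $\Omega$, namely $L^{\alpha}v=0$ in $\Omega$ with exterior datum $v=\tilde u$ on $\rn\setminus\Omega$ (this is the analogue of \rife{caffa}, and it holds in the duality/weak sense because both $\tilde u$ and $u$ represent the same right-hand side inside $\Omega$). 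So the whole matter reduces to the interior smoothness of $L^{\alpha}$-harmonic functions in $\Omega$.

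The main steps, in order, are as follows. First I would make precise the sense in which $v$ solves the homogeneous problem: test the duality formulation \rife{dualfi} for $u$ and the corresponding global identity for $\tilde u$ against $g\in C^{\infty}_{0}(\Omega)$; since the solution $w$ of \rife{duali} with datum $g$ is the same operator on both sides, subtraction gives $\int_{\Omega}(\tilde u-u)g=0$ modulo the boundary/exterior terms, identifying $v$ as the solution of the homogeneous Dirichlet problem with exterior data $\tilde u$. Second, I would invoke interior regularity for $L^{\alpha}$: because $L^{\alpha}v=0$ in $\Omega$, on any ball $B\Subset\Omega$ the function $v$ is smooth (indeed $C^{\infty}$, or at worst as regular as the kernel $a(\cdot)$ permits via the Caffarelli--Silvestre / Bass--Levin type interior estimates, cf.\ \cite{cs2}, \cite{bass}), and in particular $v\in W^{1-\frac{2-\alpha}{q},q}(B)$ for every such $B$ and every admissible $q$; one only needs the a priori bound that $\tilde u\in L^{\infty}_{loc}$ near $\partial\Omega$, which follows from the Riesz representation \rife{quela} together with $\mu$ being a finite measure supported in $\overline\Omega$, controlling the nonlocal tail that feeds the interior estimate. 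Third, combine: on each $B\Subset\Omega$,
$$
\|u\|_{W^{1-\frac{2-\alpha}{q},q}(B)}\leq \|\tilde u\|_{W^{1-\frac{2-\alpha}{q},q}(B)}+\|v\|_{W^{1-\frac{2-\alpha}{q},q}(B)}<\infty,
$$
and since $B$ was an arbitrary compactly contained ball this yields $u\in W^{1-\frac{2-\alpha}{q},q}_{loc}(\Omega)$ for every $q<\frac{n+2-\alpha}{n+1-\alpha}$.

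\textbf{Main obstacle.} The delicate point is not the interior regularity of $v$ (which is classical once $v$ is $L^{\alpha}$-harmonic in $\Omega$), but establishing cleanly that $v=\tilde u-u$ \emph{is} a genuine solution of the homogeneous problem in the appropriate (duality or distributional) sense, given that $\tilde u$ is only a very weak/duality solution and $\mu$ is merely a measure: one must justify that the nonlocal contributions match and that the subtraction is legitimate, i.e.\ that there is no defect measure concentrated on $\partial\Omega$. This is handled exactly as in the comparison argument of Lemma \ref{cont} (and as in \cite{kpu} for the $\rn$ case), using that $\tilde u$ is continuous up to $\partial\Omega$ under the exterior ball condition and that the duality formulation is stable under the splitting $\rn=\Omega\cup(\rn\setminus\Omega)$; the restriction $\alpha>1$ and $\mu\geq0$ assumed above are precisely what make this comparison and the continuity of $\tilde u$ available, in the spirit of \cite{kpu}, Pag.\ 2.
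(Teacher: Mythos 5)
Your argument is correct and follows essentially the same route as the paper: decompose $u=\tilde u-v$ where $\tilde u$ is the global duality solution of $L^{\alpha}\tilde u=\mu$ in $\rn$ (whose local regularity comes from the Riesz-potential representation of \cite{kpu}), note that $v$ is $L^{\alpha}$-harmonic in $\Omega$ with bounded exterior datum $\tilde u$, and invoke interior regularity to conclude that $v$ is locally smooth, so $u$ inherits the stated $W^{1-\frac{2-\alpha}{q},q}_{loc}(\Omega)$ regularity from $\tilde u$. The additional care you devote to justifying that $v$ genuinely solves the homogeneous problem is a point the paper passes over silently, but it does not change the substance of the argument.
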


\begin{proof}

Let $\tilde u$ be the duality solution of
\[L^{\alpha} \tilde u=\mu\quad in\quad \rn,\]
where, as we said, $\mu$ is extended by zero outside $\Omega$.
Let us consider now $v:=\tilde u-u$.
This function is a nonnegative function  in all of $\rn$, and it satisfies  $(-\Delta)^sv=0$ in $\Omega$.
Moreover, $v=\tilde u$ outside $\Omega$, and in particular $\tilde u\in L^{\infty}(\rn\backslash\Omega)$.
that implies  that $v$ is locally smooth inside $\Omega$ (see for instance \cite{sil,cs2}).
So that, $v$ also satisfies that $v\in W^{1-\frac{2-\alpha}{q},q}_{loc}(\Omega)$
for any $q<\frac{n+2-\alpha}{n+1-\alpha}$, and hence the proof is complete as $u=\tilde u-v$. 
\end{proof}

\subsection{Towards a  global regularity result}

Let us come back to general integro-differential Dirichlet boundary value problem
\begin{equation}\label{maing}
\left\{ \begin{array}{rcll}
\mathcal{L}^{s} u &=&\mu&\textrm{in }\Omega \\
u&=&0&\textrm{in }\rn\backslash\Omega,
\end{array}\right.
\end{equation}
where $\mathcal{L}^{s}$ is of the type we considered in Section \ref{2}. 
  As we already said, a fine study of the Bessel potentials associated with these operators is out of our scopes here, so, for the sake of presentation, we are going to  straighten  the assumptions on both the operators $\mathcal{L}^{s}$ and on the admissible domains $\Omega$. We assume that $\mathcal{L}^{s}$ satisfies the following Calder\'on-Zygmund type property:  for $\nu\geq 0$ and $r> N+2-2s$,  there exist $C>0$ such that,
\begin{equation}\label{cz}
\| \mathcal{L}^{-s}g\|_{W^{2s-\nu,r}(\Omega)}\leq C\|g\|_{W^{-\nu,r}(\Omega)}
\end{equation}
for any $g\in W^{-\nu,r}(\Omega)$, where  $\mathcal{L}^{-s}g$  is the solution of
$$
\left\{ \begin{array}{rcll}
\mathcal{L}^{s} w &=&g&\textrm{in }\Omega \\
w&=&0&\textrm{in }\rn\backslash\Omega\,.
\end{array}\right.
$$

 Properties as \rife{cz}, which are established for instance in the fractional laplace case, are natural for general operators of fractional order $s$ at least for sufficiently smooth domain (see for instance \cite{ziem, dk}).  
 
Concerning the regularity of the domain,  for simplicity we may think at  $\partial\Omega\in \mathcal{C}^{1,1}$. Anyway, this assumption is not sharp. The same argument will work for more general bounded domains  of $\rn$ satisfying the extension property, the uniform ball condition and with no external cusps. Moreover, as the proof will  essentially be based on Sobolev embeddings, if $\mu\in L^{1}(\Omega)$ then one can remove the exterior ball assumption on $\Omega$ (e.g. Lipschitz domains). 
\begin{theorem}\label{furge}
Let $\mathcal{L}^{s}$ and $\Omega$ as above, then  the duality solution of problem \rife{maing} belongs to $u\in W^{\eta,q}_{0}(\Omega)$, for any $q<\frac{n+2-2s}{n+1-2s}$, where $\eta=1-\frac{2-2s}{q}$. 
\end{theorem}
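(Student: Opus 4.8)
The plan is to mimic the duality argument of Theorem \ref{tei}, but upgrading the regularity input from the $L^\infty$/$C^{\overline\Omega}$ estimate to the Calder\'on--Zygmund estimate \rife{cz}, and then to read off the fractional Sobolev regularity of the duality solution $u$ by a Riesz-type representation in a negative-order Sobolev space. Concretely, fix $\nu\ge 0$ and $r>N+2-2s$ as in \rife{cz}. For $g\in W^{-\nu,r}(\Omega)$ let $w=\mathcal L^{-s}g$ be the weak solution of the adjoint (here self-adjoint) homogeneous problem; by \rife{cz} we have $\|w\|_{W^{2s-\nu,r}(\Omega)}\le C\|g\|_{W^{-\nu,r}(\Omega)}$, and since $r>N+2-2s\ge N/(2s-\nu)$ (for the relevant range of $\nu$), the second part of the Sobolev embedding Theorem \ref{emb} gives $w\in C^{\gamma}(\overline\Omega)$ with $\gamma=(2s-\nu)-N/r>0$, hence in particular $w\in C(\overline\Omega)$ so that $\int_\Omega w\,d\mu$ is well defined. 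Thus the functional $T(g):=\int_\Omega w\,d\mu$ satisfies $|T(g)|\le \|w\|_{L^\infty(\Omega)}|\mu|(\Omega)\le C\|g\|_{W^{-\nu,r}(\Omega)}$, so $T$ is a bounded linear functional on $W^{-\nu,r}(\Omega)$.

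Next I would identify the dual space. Since $W^{-\nu,r}(\Omega)$ is (by definition/standard duality for fractional Sobolev spaces) the dual of $W^{\nu,r'}_0(\Omega)$, the functional $T$ is represented by an element $u$ of $W^{\nu,r'}_0(\Omega)$, i.e. there is a unique $u\in W^{\nu,r'}_0(\Omega)$ with $\int_\Omega w\,d\mu=\langle u,g\rangle$ for all such $g$; restricting to $g\in C^\infty_0(\Omega)$ this is exactly \rife{dualfi}, so $u$ is the duality solution (unique by the fundamental lemma of the calculus of variations, as in Theorem \ref{tei}, and coinciding with the one produced there by density of smooth functions). It remains to optimize over the admissible parameters: I want to choose $\nu$ and $r$ so as to make $r'=q$ as large as possible and $\nu=\eta$ as in the statement. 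The natural choice is to take $r$ slightly above the threshold $N+2-2s$ and then pick $\nu$ as the largest value for which \rife{cz} still feeds a continuous embedding into $C(\overline\Omega)$; tracking the arithmetic, the borderline relation $\eta = 1-\frac{2-2s}{q}$ with $q<\frac{n+2-2s}{n+1-2s}$ should drop out of equating $q=r'$ and $\gamma=(2s-\nu)-N/r=0$ at the endpoint. I would present this as: for each $q<\frac{n+2-2s}{n+1-2s}$ set $r=q'$, $\nu=\eta=1-\frac{2-2s}{q}$, check $r>N+2-2s$ and $2s-\nu-N/r>0$, apply \rife{cz} and Theorem \ref{emb}, and conclude $u\in W^{\eta,q}_0(\Omega)$.

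The main obstacle I anticipate is purely bookkeeping: verifying that the pair $(\nu,r)$ needed to land exactly on the exponents $(\eta,q)$ in the statement is indeed admissible for \rife{cz} and simultaneously sub-critical for the Hölder embedding — i.e. that the chain of inequalities $r'=q$, $r>N+2-2s$, $(2s-\nu)\,r>N$ is consistent in the whole stated range of $q$, with equality only in the limit $q\to\frac{n+2-2s}{n+1-2s}$. A secondary point to handle carefully is the membership in $W^{\eta,q}_0$ rather than merely $W^{\eta,q}$: this follows because the representing element lies in the dual of $W^{-\nu,r}(\Omega)$, which by definition is $W^{\nu,r'}_0(\Omega)$, together with $u\equiv 0$ outside $\Omega$ already built into the definition of duality solution; one should remark (as the paragraph before the theorem does) that the $\mathcal C^{1,1}$ (or extension + uniform ball + no cusp) hypothesis on $\Omega$ is what legitimizes both \rife{cz} and this duality identification, and that for $\mu\in L^1(\Omega)$ one may instead run the argument with $\nu=0$, $g\in L^r(\Omega)$, $w\in C(\overline\Omega)$ coming from Lemma \ref{cont}, dropping the exterior ball condition.
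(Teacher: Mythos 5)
Your proposal is correct and follows essentially the same route as the paper: fix $\nu=\eta$ and $r=q'$, combine the Calder\'on--Zygmund estimate \rife{cz} with the second part of Theorem \ref{emb} to bound $|T(g)|\le C\|g\|_{W^{-\eta,q'}(\Omega)}$, and represent $T$ by an element of $W^{\eta,q}_0(\Omega)$; the bookkeeping you flag does close, since $(2s-\eta)q'>n$ is exactly equivalent to $q<\frac{n+2-2s}{n+1-2s}$, which in turn forces $q'>n+2-2s$.
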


\begin{remark}
\label{alp}
We notice that the derivation exponent
$\eta=1-\frac{2-2s}{q}$
can be negative. This is not the case if, for instance, $s>\frac{1}{2}$, so, for the sake of exposition we will understand this assumption in the following proof. However, the result is formally correct for any $s\in (0,1)$ once one interpret the space $W^{\eta,q} (\Omega)$ in the distributional sense as the dual of $W^{-\eta,q'}_{0}(\Omega)$ (see again \cite{dpv}). 

Finally, we want to emphasize that, as $s\to 1^-$, we \emph{formally} recover the classical optimal
summability of the gradient for linear elliptic boundary value problems with
measure data (see again \cite{s}), $u\in W^{1,q}_{0}(\Omega)$, for any $q<\frac{n}{n-1}$.
Also notice that, as before, the result is optimal, as it coincides with  the regularity 
of the fundamental
solution for the fractional Laplacian $G(x)=c|x|^{2s-n}$. 
\end{remark}

\begin{proof}
Let us fix $q$ as in the statement of Theorem \ref{furge} and let $\eta=1-\frac{2-2s}{q}$.  
Observe that $(2s-\eta)q'>n$ if and only if $q<\frac{n+2-2s}{n+1-2s}$. In particular, under this assumption we can use Theorem \ref{emb} in order to have
\[
 \|w\|_{L^{\infty}(\Omega)}\leq C\|w\|_{W^{2s-\eta,q'}(\Omega)}\,,     \qquad \forall w\in W^{2s-\eta, q'}(\Omega), 
 \]

Hence,  we reason as in the proof of Theorem \ref{tei}, and, using \rife{cz}, we have that, for any $g\in C^{\infty}_{0}(\Omega)$ 
$$
|T(g)| \leq \|w\|_{L^{\infty}(\Omega)}\ |\mu|(\Omega)\leq C\|w\|_{W^{2s-\eta,q'}(\Omega)}\leq C\|g\|_{W^{-\eta, q'}(\Omega)},
$$
from which we deduce that  $T$, defined as
$$
T(g) :=\int_{\Omega} w(x)\ d\mu=\int_{\Omega} u g\,,
$$
is a linear and continuous operator on $W^{-\eta, q'}(\Omega)$, that implies $u\in W^{\eta, q}_{0}(\Omega) $ as $u\equiv 0$ outside of $\Omega$.  
\end{proof}

\subsection*{Acknowledgement}
The author is deeply grateful to Xavier Ros-Oton for essential advices and fruitful discussions during the preparation of this manuscript. 
The author  is   partially supported by  the Gruppo Nazionale per l'Analisi Matematica, la Probabilit\`a e le loro Applicazioni (GNAMPA) of the Istituto Nazionale di Alta Matematica (INdAM).

\end{document}